\date{}
\DeclareMathOperator{\St}{St}
\DeclareMathOperator{\Aut}{Aut}
\DeclareMathOperator{\Sym}{Sym}
\DeclareMathOperator{\Alt}{Alt}
\DeclareMathOperator{\Epi}{Epi}
\DeclareMathOperator{\Hom}{Hom}
\newcommand{\RR}{\mathbb{R}}
\newcommand{\NN}{\mathbb{N}}
\newcommand{\ZZ}{\mathbb{Z}}
\newcommand{\II}{\mathcal{I}}
\newtheorem{thm}{Theorem}[section]
\newtheorem{defn}[thm]{Definition}
\newtheorem{prop}[thm]{Proposition}
\newtheorem{ex}[thm]{Example}
\newtheorem{lemma}[thm]{Lemma}
\newtheorem{notation}[thm]{Notation}
\newtheorem{remark}[thm]{Remark}
\title[Subexponential growth of groups acting on rooted trees]{On the subexponential growth of groups acting on rooted trees}
\date{\today}
\author{Dominik Francoeur}
\begin{document}

\begin{abstract}
We show that every group in a large family of (not necessarily torsion) spinal groups acting on the ternary rooted tree is of subexponential growth.
\end{abstract}

\maketitle

\section{Introduction}

Word growth is an important quasi-isometry invariant of finitely generated groups. Such groups can be broadly divided into three distinct classes: groups of polynomial growth, groups of exponential growth and groups of intermediate growth (that is, groups whose growth is faster than any polynomial but slower than exponential).

While the existence of groups of polynomial or exponential growth is readily established, the question is far less obvious in the case of groups of intermediate growth and was first raised by Milnor in 1968 (\cite{Milnor68}). It was settled by Grigorchuk in 1983 \cite{Grigorchuk83,Grigorchuk85} when he proved that the infinite, finitely generated torsion group defined in \cite{Grigorchuk80}, now known as the first Grigorchuk group, is of intermediate growth.

Since this initial discovery, many other groups of intermediate growth have been found. One of those, now known as the Fabrykowski-Gupta group, was first studied by Fabrykowski and Gupta in \cite{FabrykowskiGupta85} and in \cite{FabrykowskiGupta91}. It is a self-similar branch group acting on the ternary rooted tree. The Fabrykowski-Gupta group was later revisited by Bartholdi and Pochon in \cite{BartholdiPochon09}, where they provided a different proof of the fact that its growth is intermediate, along with bounds on the growth.

In order to establish their results, Bartholdi and Pochon proved that under suitable conditions on the length of words, the growth of a self-similar group acting on a rooted tree is subexponential if and only if the growth of a subset of elements (those for which the projection to a certain fixed level does not reduce the length) is subexponential.

It turns out that the same holds if we consider the subset of elements whose length is never reduced by the projection to any level. As this set is smaller than the one considered by Bartholdi and Pochon, it can potentially be easier to show that its growth is subexponential.

After reviewing some basic results and definitions in Section \ref{sec:Preliminaries}, we will define non-$\ell_1$-expanding similar families of groups of rooted tree automorphisms and prove a generalized Bartholdi-Pochon criterion for such families of groups in Section \ref{sec:CompressionAndGrowth}. We will then use it in Section \ref{sec:GrowthSpinalGroups} to prove that a large family of spinal groups acting on the ternary rooted tree are of subexponential growth.


\section{Preliminaries}\label{sec:Preliminaries}

\subsection{Equivalence classes of non-decreasing functions}

Given two non-decreasing functions $f,g \colon \NN \rightarrow \NN$, we write $f\lesssim g$ if there exists $C\in \NN^*$ such that $f(n)\leq g(Cn)$ for all $n\in \NN^*$. The functions $f$ and $g$ are said to be equivalent, written $f\sim g$, if $f\lesssim g$ and $g\lesssim f$. We say that
\begin{itemize}
\item $f$ is of \emph{polynomial growth} if there exists $d\in \NN$ such that $f\lesssim n^d$
\item $f$ is of \emph{superpolynomial growth} if $n^d \lnsim f$ for all $d\in \NN$
\item $f$ is of \emph{exponential growth} if $f\sim e^n$
\item $f$ is of \emph{subexponential growth} if $f\lnsim e^n$
\item $f$ is of \emph{intermediate growth} if $f$ is of superpolynomial growth and of subexponential growth.
\end{itemize}

\subsection{Word pseudometrics and word growth}

Given a finite symmetric generating set of a group $G$, one can construct a natural metric on $G$ called the \emph{word metric}. For our purposes, it will be more convenient to consider the more general notion of a \emph{word pseudometric}.

\begin{defn}
Let $G$ be a finitely generated group and $S$ be a symmetric finite set generating $G$. A map $|\cdot|\colon S\rightarrow \{0,1\}$ that associates to every generator a length of $0$ or $1$ will be called a \emph{pseudolength} on $S$. A pseudolength can be extended to a map
\begin{align*}
|\cdot|\colon G &\rightarrow \NN \\
g &\mapsto \min_{g=s_1\dots s_k, s_i\in S}\left\{\sum_{i=1}^{k}|s_i| \right\}
\end{align*}
called the \emph{word pseudonorm} of $G$ (associated to $(S, |\cdot|)$). The corresponding pseudometric
\begin{align*}
d\colon G\times G &\rightarrow \NN\\
(g,h) &\mapsto |g^{-1}h|
\end{align*}
is the \emph{word pseudometric} of $G$ (associated to $(S, |\cdot|)$).
\end{defn}

\begin{remark}
If every generator is assigned a length of $1$, then the word pseudometric is in fact a metric, called the \emph{word metric}.
\end{remark}

If there is only a finite number of elements with length $0$, one can define a growth function for the group with regards to the given pseudometric. The growth function thus obtained is in fact equivalent to the usual growth function.

\begin{prop}
Let $G$ be a group generated by a finite symmetric set $S$ and $|\cdot| \colon S \rightarrow \{0,1\}$ be a pseudolength on $S$. If the subgroup
\[G_0 = \langle \{s\in S \mid |s|=0\}\rangle\]
is finite, then the growth function
\begin{align*}
\gamma_{G,S,|\cdot|} \colon \NN &\rightarrow \NN \\
n&\mapsto \left|B_{G,S,|\cdot|}(n)\right|
\end{align*}
is well-defined, where $B_{G,S,|\cdot|}(n)=\left\{g\in G \mid |g|\leq n\right\}$. Furthermore, $\gamma_{G,S,|\cdot|} \sim \gamma_{G,S}$, where $\gamma_{G,S}$ is the usual growth function obtained by giving length $1$ to each generator.
\end{prop}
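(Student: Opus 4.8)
The plan is to establish the two assertions in turn: first that each ball $B_{G,S,|\cdot|}(n)$ is finite (so that $\gamma_{G,S,|\cdot|}$ makes sense), and then that $\gamma_{G,S,|\cdot|} \sim \gamma_{G,S}$. Throughout, write $S_1 = \{s \in S : |s| = 1\}$ and let $|\cdot|_S$ denote the ordinary word norm obtained by giving every generator length $1$.

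For well-definedness, I would start from an arbitrary $g \in G$ with $|g| = n$ together with a minimal expression $g = s_1 \cdots s_k$ satisfying $\sum_i |s_i| = n$. Grouping the maximal runs of length-$0$ generators around the $n$ occurrences of length-$1$ generators rewrites this as $g = g_0 t_1 g_1 \cdots t_n g_n$ with each $t_i \in S_1$ and each $g_i \in G_0$. Since $G_0$ and $S_1$ are both finite, there are at most $|G_0|^{n+1}|S_1|^{n}$ such products, and hence $|B_{G,S,|\cdot|}(n)| < \infty$. Producing this normal form cleanly (and checking the count of length-$1$ letters is exactly $n$) is the only step needing a little care in the write-up; everything else is bookkeeping.

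For the equivalence, one direction is immediate: each generator has pseudolength at most $1$, so $|g| \le |g|_S$ for every $g$, giving $B_{G,S}(n) \subseteq B_{G,S,|\cdot|}(n)$ and therefore $\gamma_{G,S} \lesssim \gamma_{G,S,|\cdot|}$ (with constant $1$). For the reverse direction I would set $M = \max\{|g|_S : g \in G_0\}$, a maximum that exists precisely because $G_0$ is finite — this is the one place the hypothesis is genuinely used. Applying the triangle inequality for $|\cdot|_S$ to the decomposition $g = g_0 t_1 g_1 \cdots t_n g_n$ gives $|g|_S \le (n+1)M + n \le (2M+1)n$ for $n \ge 1$, so $B_{G,S,|\cdot|}(n) \subseteq B_{G,S}((2M+1)n)$ and hence $\gamma_{G,S,|\cdot|} \lesssim \gamma_{G,S}$. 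Combining the two inequalities yields $\gamma_{G,S,|\cdot|} \sim \gamma_{G,S}$. There is no substantive obstacle here: the whole argument reduces to the normal form above plus a single application of the triangle inequality, with finiteness of $G_0$ entering only to bound $M$.
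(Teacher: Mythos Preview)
Your proposal is correct and follows essentially the same route as the paper: obtain the normal form $g = g_0 t_1 g_1 \cdots t_n g_n$ with $t_i \in S_1$ and $g_i \in G_0$, use it to bound $|B_{G,S,|\cdot|}(n)|$ by $|G_0|^{n+1}|S_1|^n$, and then set $M = \max\{|g|_S : g \in G_0\}$ to deduce $|g|_S \le (n+1)M + n \le (2M+1)n$ for $n \ge 1$, with the reverse inequality being trivial. The paper's proof is the same argument with the same constants.
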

\begin{proof}
To show that $\gamma_{G,S,|\cdot|}$ is well-defined, we need to show that
\[\left|B_{G,S,|\cdot|}(n)\right| < \infty\]
for every $n\in \NN$. For $g\in G$ with $|g|=n$, it follows from the definition of the word pseudonorm that there exist $s_1,\dots, s_n \in S_1$, $g_0,\dots, g_{n} \in G_0$ such that
\[g=g_0s_1g_1\dots s_n g_{n},\]
where $S_1 = \{s\in S \mid |s|=1\}$. Hence,
\[\left|B_{G,S,|\cdot|}(n)\right| \leq |G_0|^{n+1}|S_1|^{n} < \infty.\]

We must now show that the growth function with respect to the word pseudometric is equivalent to the growth function with the word metric. Let us denote by $|\cdot|_w$ the word metric in $G$ with respect to $S$. Let
\[M = \max\{|g|_w \mid g\in G_0\}.\]
Then, the decomposition
\[g=g_0s_1g_1\dots s_n g_{n}\]
implies that
\begin{align*}
|g|_w &\leq (n+1)M + n \\
&= |g|(M+1) + M \\
&\leq (2M+1)|g|
\end{align*}
if $|g|\geq 1$. Hence, if $n\geq 1$,
\[\gamma_{G,S,|\cdot|}(n) \leq \gamma_{G,S}((2M+1)n)\]
so $\gamma_{G,S,|\cdot|} \lesssim \gamma_{G,S}$. Since it is clear from the definition that $\gamma_{G,S} \lesssim \gamma_{G,S,|\cdot|}$, we have $\gamma_{G,S,|\cdot|} \sim \gamma_{G,S}$.
\end{proof}

\begin{remark}
A word pseudometric yielding a finite subgroup of length $0$ will be called a \emph{proper} word pseudometric. Since the growth function coming from a proper word pseudometric is equivalent to the growth function coming from a word metric, we will make no distinction between the two.
\end{remark}

In what follows, we will be interested mainly in distinguishing between groups of exponential or subexponential growth. For this purpose, it will be convenient to study a quantity called the exponential growth rate of the group.

\begin{prop}\label{prop:GrowthRateIsWellDefined}
Let $G$ be a finitely generated group, $S$ be a finite symmetric generating set and $|\cdot|\colon G \rightarrow \NN$ be a proper word pseudonorm. The limit
\[\kappa_{G,S, |\cdot|} = \lim_{n\to \infty}\gamma_{G,S, |\cdot|}(n)^{\frac{1}{n}}\]
exists and is called the \emph{exponential growth rate} of the group $G$ (with respect to the generating set $S$ and the pseudonorm $|\cdot|$).
\end{prop}

\begin{prop}
Let $G$ be a finitely generated group with a finite symmetric generating set $S$ and a proper word pseudonorm $|\cdot|$. Then, $\kappa_{G,S,|\cdot|} > 1$ if and only if $G$ is of exponential growth.
\end{prop}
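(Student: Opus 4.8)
The plan is to prove the two implications separately, using that ``$G$ is of exponential growth'' means exactly $\gamma_{G,S,|\cdot|}\sim e^n$, that $\kappa:=\kappa_{G,S,|\cdot|}=\lim_{n\to\infty}\gamma_{G,S,|\cdot|}(n)^{1/n}$ exists by Proposition~\ref{prop:GrowthRateIsWellDefined}, and that $\gamma_{G,S,|\cdot|}$ is submultiplicative, i.e. $\gamma_{G,S,|\cdot|}(n+m)\le\gamma_{G,S,|\cdot|}(n)\gamma_{G,S,|\cdot|}(m)$ for all $n,m\in\NN$. The last fact follows from the triangle inequality for the word pseudometric, since any element of length at most $n+m$ can be written as a product of one of length at most $n$ by one of length at most $m$.

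For the implication $\kappa_{G,S,|\cdot|}>1 \Rightarrow G$ is of exponential growth, I would assume $\kappa>1$ and first deduce that $\gamma_{G,S,|\cdot|}(n)\ge\kappa^n$ for every $n\ge1$. Indeed, if some $n_0\ge1$ had $\gamma_{G,S,|\cdot|}(n_0)<\kappa^{n_0}$, then submultiplicativity would give $\gamma_{G,S,|\cdot|}(kn_0)\le\gamma_{G,S,|\cdot|}(n_0)^k$, hence $\gamma_{G,S,|\cdot|}(kn_0)^{1/(kn_0)}\le\gamma_{G,S,|\cdot|}(n_0)^{1/n_0}<\kappa$ for all $k\ge1$, contradicting the convergence of $\gamma_{G,S,|\cdot|}(m)^{1/m}$ to $\kappa$ along the subsequence $m=kn_0$. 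Then, choosing $C\in\NN^*$ with $C\ln\kappa\ge1$ (possible since $\ln\kappa>0$), one obtains $\gamma_{G,S,|\cdot|}(Cn)\ge\kappa^{Cn}=e^{Cn\ln\kappa}\ge e^n$, so $e^n\lesssim\gamma_{G,S,|\cdot|}$. Combined with the upper bound $\gamma_{G,S,|\cdot|}\lesssim e^n$, valid for any finitely generated group and obtained exactly as in the proof of the preceding proposition by counting the decompositions $g=g_0s_1g_1\cdots s_ng_n$, this yields $\gamma_{G,S,|\cdot|}\sim e^n$.

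For the converse, assuming $\gamma_{G,S,|\cdot|}\sim e^n$, in particular $e^n\lesssim\gamma_{G,S,|\cdot|}$, so there is $C\in\NN^*$ with $e^n\le\gamma_{G,S,|\cdot|}(Cn)$ for all $n\ge1$; taking $\frac1{Cn}$-th roots gives $\gamma_{G,S,|\cdot|}(Cn)^{1/(Cn)}\ge e^{1/C}$, and letting $n\to\infty$ gives $\kappa\ge e^{1/C}>1$. The whole argument is essentially bookkeeping with the relation $\lesssim$, so I do not expect a genuine obstacle; the only step that deserves care is the lower bound $\gamma_{G,S,|\cdot|}(n)\ge\kappa^n$, because the sequence $\gamma_{G,S,|\cdot|}(n)^{1/n}$ need not be monotone and this inequality must be extracted from submultiplicativity together with the existence of the limit — it is the statement that in Fekete's lemma the limit coincides with the infimum.
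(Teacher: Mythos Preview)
The paper states this proposition without proof, treating it as a standard fact; there is nothing to compare against. Your argument is correct and is the standard one: submultiplicativity of $\gamma_{G,S,|\cdot|}$ (which holds in the pseudonorm setting because any word of pseudolength at most $n+m$ splits into a prefix of pseudolength exactly $n$ and a suffix of pseudolength at most $m$, using that the generator lengths lie in $\{0,1\}$) gives via Fekete's lemma that $\kappa=\inf_{n\ge 1}\gamma_{G,S,|\cdot|}(n)^{1/n}$, hence $\gamma_{G,S,|\cdot|}(n)\ge\kappa^n$, and the rest is routine manipulation of the relation $\lesssim$. Your self-identified ``only step that deserves care'' is handled correctly.
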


It will sometimes be more convenient to consider spheres instead of balls. In the case of infinite finitely generated groups, the exponential growth rate can also be calculated from the size of spheres.

\begin{prop}\label{prop:SphericalGrowthRateIsTheSame}
Let $G$ be an infinite finitely generated group with finite symmetric generating set $S$ and proper word pseudonorm $|\cdot|$. Then,
\[\kappa_{G,S,|\cdot|} = \lim_{n\to\infty}|\Omega_{G,S,|\cdot|}(n)|^{\frac{1}{n}},\]
where $\Omega_{G,S,|\cdot|}(n)$ is the sphere of radius $n$ in the word pseudometric $|\cdot|$.
\end{prop}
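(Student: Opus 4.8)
The plan is to mimic the proof of the analogous statement for the word metric, taking care only that pseudolengths take values in $\{0,1\}$. The first step is to show that the limit on the right-hand side exists. I would prove that $\bigl(\ln|\Omega_{G,S,|\cdot|}(n)|\bigr)_n$ is subadditive and then invoke Fekete's subadditive lemma, just as in Proposition~\ref{prop:GrowthRateIsWellDefined}. Concretely, given $g$ with $|g|=n+m$ and a decomposition $g=s_1\cdots s_k$ realizing the pseudonorm (so $\sum_i|s_i|=n+m$), the partial sums $\sum_{i\le j}|s_i|$ increase by $0$ or $1$ at each step and therefore take every value in $\{0,1,\dots,n+m\}$; choosing $j$ with $\sum_{i\le j}|s_i|=n$ and setting $h=s_1\cdots s_j$, we have $|h|\le n$ and $|h^{-1}g|\le m$, and the triangle inequality $n+m=|g|\le|h|+|h^{-1}g|\le n+m$ forces $|h|=n$ and $|h^{-1}g|=m$. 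Thus $g\mapsto(h,h^{-1}g)$ is an injection $\Omega_{G,S,|\cdot|}(n+m)\hookrightarrow\Omega_{G,S,|\cdot|}(n)\times\Omega_{G,S,|\cdot|}(m)$, which gives the submultiplicativity. Applying the same cutting argument to an element of pseudonorm $\ge n$ — one exists because $G$ is infinite while $\Omega_{G,S,|\cdot|}(0)=G_0$ is finite — shows every sphere is nonempty, so $|\Omega_{G,S,|\cdot|}(n)|\ge 1$ for all $n$, the Fekete limit $\lim_n \tfrac1n\ln|\Omega_{G,S,|\cdot|}(n)|$ is a finite nonnegative number, and hence $\rho:=\lim_n|\Omega_{G,S,|\cdot|}(n)|^{1/n}$ exists with $\rho\ge 1$.

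One inequality is immediate: since $\Omega_{G,S,|\cdot|}(n)\subseteq B_{G,S,|\cdot|}(n)$ we get $|\Omega_{G,S,|\cdot|}(n)|^{1/n}\le\gamma_{G,S,|\cdot|}(n)^{1/n}$, hence $\rho\le\kappa_{G,S,|\cdot|}$.

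For the reverse inequality I would use $\gamma_{G,S,|\cdot|}(n)=\sum_{i=0}^n|\Omega_{G,S,|\cdot|}(i)|\le(n+1)\max_{0\le i\le n}|\Omega_{G,S,|\cdot|}(i)|$ together with $\rho\ge 1$. Fix $\varepsilon>0$. Since $|\Omega_{G,S,|\cdot|}(m)|^{1/m}\to\rho$, there is $M$ with $|\Omega_{G,S,|\cdot|}(m)|\le(\rho+\varepsilon)^m$ for all $m\ge M$; since there are finitely many $m<M$ and each such sphere is finite, after enlarging the constant we obtain $|\Omega_{G,S,|\cdot|}(m)|\le C(\rho+\varepsilon)^m$ for all $m$ and some $C\ge 1$. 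Because $\rho+\varepsilon\ge 1$, for $0\le i\le n$ we have $|\Omega_{G,S,|\cdot|}(i)|\le C(\rho+\varepsilon)^i\le C(\rho+\varepsilon)^n$, so $\gamma_{G,S,|\cdot|}(n)\le(n+1)C(\rho+\varepsilon)^n$ and $\gamma_{G,S,|\cdot|}(n)^{1/n}\le\bigl((n+1)C\bigr)^{1/n}(\rho+\varepsilon)$. Letting $n\to\infty$ yields $\kappa_{G,S,|\cdot|}\le\rho+\varepsilon$, and since $\varepsilon$ is arbitrary, $\kappa_{G,S,|\cdot|}\le\rho$, which finishes the proof.

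The one point that needs genuine attention is bookkeeping about where the hypothesis that $G$ is infinite enters: it is used to guarantee that every sphere is nonempty, which is what makes $\rho\ge 1$ and hence legitimizes passing from a bound on $\max_{0\le i\le n}|\Omega_{G,S,|\cdot|}(i)|$ back to a bound on $\gamma_{G,S,|\cdot|}(n)$. Everything else is a routine transcription of the word-metric argument, the only new ingredient being that a $\{0,1\}$-valued pseudolength lets one cut a minimal-length decomposition at any prescribed intermediate value.
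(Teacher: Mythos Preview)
Your proof is correct and follows essentially the same approach as the paper's: establish submultiplicativity of the sphere sizes and apply Fekete, get one inequality from $\Omega\subseteq B$, and get the other from $\gamma(n)\le (n+1)\max_{i\le n}|\Omega(i)|$ together with the fact that $G$ infinite forces $|\Omega(n)|\ge 1$. The only cosmetic difference is in finishing the reverse inequality: the paper tracks the maximising index $s(n)$ and manipulates $(|\Omega(s(n))|^{1/s(n)})^{s(n)/n}$, whereas your direct $\varepsilon$-argument bounding $|\Omega(m)|\le C(\rho+\varepsilon)^m$ is cleaner and sidesteps any question about the behaviour of $s(n)/n$.
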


\subsection{Rooted trees}

Let $d>1$ be a natural number and $T_d$ be the $d$-regular rooted tree. The set of vertices of $T_d$ is $V(T_d) = X^*$, the set of finite words in the alphabet $X=\{1,2,\dots d\}$. We will often abuse the notation and write $v\in T_d$ instead of $v\in V(T_d)$ or $v\in X^*$ to refer to a vertex of $T_d$. The set of edges is
\[E(T_d)= \left\{\{w,wx\} \subset X^* \mid w\in X^*, x\in X\right\}.\]
A vertex $w'\in X^*$ is said to be a \emph{child} of $w\in X^*$ if $w'=wx$ for some $x\in X$. In this case, $w$ is called the \emph{parent} of $w'$. For $n\in \NN$, the set $L_n\subset X^*$ of words of length $n$ is called the $n$-th level of the tree.

The group of automorphisms of $T_d$, that is, the group of bijections of $X^*$ that leave $E(T_d)$ invariant, will be denoted by $\Aut(T_d)$. For $G\leq \Aut(T_d)$ and $v\in T_d$, we will write $\St_G(v)$ to refer to the stabilizer of $v$ in $G$. For $n\in \NN$, we define the stabilizer of the $n$-th level in $G$, denoted $\St_G(n)$, as
\[\St_G(n) = \bigcap_{v\in L_n}\St_G(v).\]
When $G=\Aut(T_d)$, we will often simply write $\St(v)$ and $\St(n)$ in order to make the notation less cluttered.

For $v\in T_d$, we will denote by $T_v$ the subtree rooted at $v$. This subtree is naturally isomorphic to $T_d$. Any $g\in St(v)$ leaves $T_v$ invariant. The restriction of $g$ to $T_v$ is therefore (under the natural isomorphism between $T_v$ and $T_d$) an automorphism of $T_d$, which we will denote by $g|_v$. The map
\begin{align*}
\varphi_v\colon \St(v) &\rightarrow \Aut(T_d) \\
g&\mapsto g|_v
\end{align*}
is clearly a homomorphism. This allows us to define a homomorphism
\begin{align*}
\psi\colon \St(1)&\rightarrow \Aut(T_d)^d \\
g&\mapsto \left(g|_1,g|_2,\dots, g|_d\right).
\end{align*}
We can also define homomorphisms
\[\psi_n\colon \St(n) \rightarrow \Aut(T_d)^{d^n}\]
for all $n\in \NN^*$ inductively by setting $\psi_1 = \psi$ and 
\[\psi_{n+1}(g) = (\psi_n(g|_1), \psi_n(g|_2), \dots, \psi_n(g|_d)).\]
It is clear from the definition that these homomorphisms are in fact isomorphisms. In what follows, we will use the same notation for those maps and their restriction to some subgroup $G\leq \Aut(T_d)$. In this case, it is important to note that the maps $\psi_n$ are still injective, but are no longer surjective in general.

There is a faithful action of $\Sym(d)$ on $T_d$ given by
\[\tau(xw) = \tau(x)w\]
where $\tau\in \Sym(d)$, $x\in X$ and $w\in X^*$. This action gives us an embedding of $\Sym(d)$ in $T_d$ and in what follows, we will often identify $\Sym(d)$ with its image in $T_d$ under this embedding. The automorphisms in the image of $\Sym(d)$ are called \emph{rooted automorphisms}. Any $g\in \Aut(T_d)$ can be uniquely written as $g=h\tau$ with $h\in \St(1)$ and $\tau\in \Sym(d)$. In a slight abuse of notation, we will often find it more convenient to write
\[g=(g_1,g_2,\dots, g_d)\tau,\]
where $(g_1,g_2,\dots,g_d) = \psi(h)$.

More generally, for any $n\in \NN^*$, there is a natural embedding of the wreath product $\Sym(d)\wr \dots \wr\Sym(d)$ of $\Sym(d)$ with itself $n$ times in $\Aut(T_d)$ and any $g\in \Aut(T_d)$ can be written uniquely as $g=h\tau$ with $h\in \St(n)$ and $\tau \in \Sym(d)\wr\dots\wr\Sym(d)$. In the same fashion as above, we will often write
\[g= (g_{11\dots 1}, g_{11\dots 2}, \dots, g_{dd\dots d})\tau\]
where $(g_{11\dots 1}, g_{11\dots 2}, \dots, g_{dd\dots d}) = \psi_n(h)$.

\section{Incompressible elements and growth}\label{sec:CompressionAndGrowth}

\subsection{Non-$\ell_1$-expanding similar families of groups acting on rooted trees}

A classical way of showing that a group acting on a rooted tree is of subexponential growth is to show that the projection of elements to some level induces a significant amount of length reduction. We introduce here a class of groups that seem well suited to this kind of argument, \emph{non-$\ell_1$-expanding similar families} of groups acting on rooted trees. This is a restriction of the more general notion of \emph{similar families} of groups as defined by Bartholdi in \cite{Bartholdi15}.

Note that for the rest of this section, $d$ will denote an integer greater than $1$.

\begin{defn}
Let $\Omega$ be a set and $\sigma\colon \Omega \rightarrow \Omega$ be a map from this set to itself. For each $\omega\in \Omega$, let $G_\omega \leq \Aut(T_d)$ be a group of automorphisms of $T_d$ acting transitively on each level, generated by a finite symmetric set $S_\omega$ and endowed with a proper word pseudonorm $|\cdot|_\omega$. The family $\{(G_\omega, S_\omega, |\cdot|_\omega)\}_{\omega\in \Omega}$ is a \emph{similar family} of groups of automorphisms of $T_d$ if for all $\omega\in \Omega$ and all $g\in G_\omega$,
\[g=(g_1,g_2, \dots, g_d)\tau\]
with $g_1,g_2,\dots,g_d \in G_{\sigma(\omega)}$, $\tau\in \Sym(d)$. Furthermore, if
\[\sum_{i=1}^d |g_i|_{\sigma(\omega)} \leq |g|_\omega,\]
the similar family $\{(G_\omega, S_\omega, |\cdot|_\omega)\}_{\omega\in \Omega}$ is said to be a \emph{non-$\ell_1$-expanding similar family} of groups of automorphisms of $T_d$.

In the case where $|\Omega|=1$, a similar family contains only one group, which is then said to be \emph{self-similar}.
\end{defn}

\begin{remark}
There is a more general notion of a similar family of groups in which the groups need not act on regular rooted trees, but only on spherically homogeneous rooted trees. However, we will not need such generality for what follows.
\end{remark}

\begin{remark}
In what follows, we will frequently consider only (non-$\ell_1$-expanding) similar families of the form $\{(G_\nu, S_\nu, |\cdot|_\nu)\}_{\nu\in \NN}$, where the map from $\NN$ to $\NN$ is the addition by $1$. This has the advantage of simplifying the notation without causing any significant loss in generality. Indeed, let $\{(G_\omega, S_\omega, |\cdot|_\omega)\}_{\omega\in \Omega}$ be a (non-$\ell_1$-expanding) similar family. Then, for any $\omega\in \Omega$, $\{(G_\nu, S_\nu, |\cdot|_\nu)\}_{\nu\in \NN}$ is also a (non-$\ell_1$-expanding) similar family, where $(G_\nu, S_\nu, |\cdot|_\nu) = (G_{\sigma^{\nu}(\omega)}, S_{\sigma^{\nu}(\omega)}, |\cdot|_{\sigma^{\nu}(\omega)})$.
\end{remark}

\begin{remark}
If $\{(G_\nu, S_\nu, |\cdot|_\nu)\}_{\nu\in \NN}$ is a non-$\ell_1$-expanding similar family of groups, then for any $\nu\in \NN$ and $s=(s_1,s_2,\dots,s_d)\tau\in S_\nu$, we have
\[\sum_{i=1}^{d}{|s_i|_{\nu+1}} \leq |s|_{\nu} \leq 1\]
so there is at most one $s_i$ with positive length (and none if $|s|_{\nu}=0$).
\end{remark}

\begin{notation}
In order to keep the notation simple, if $\{(G_\nu, S_\nu, |\cdot|_\nu)\}_{\nu\in \NN}$ is a non-$\ell_1$-expanding similar family of groups, for $\nu\in \NN$, we will write $\gamma_\nu$ for the growth function and $\kappa_\nu$ for the exponential growth rate of $G_\nu$ with respect to the pseudonorm $|\cdot|_\nu$.
\end{notation}

The exponential growth rates of a non-$\ell_1$-expanding similar family of groups form a non-decreasing sequence.

\begin{prop}\label{prop:GrowthRateCannotDecrease}
Let $\{(G_\nu,S_\nu,|\cdot|_\nu)\}_{\nu\in \NN}$ be a non-$\ell_1$-expanding similar family of groups of automorphisms of $T_d$. For any $\nu\in \NN$,
\[\kappa_\nu \leq \kappa_{\nu+1}.\]
\end{prop}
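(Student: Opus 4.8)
The plan is to show that a single copy of $G_\nu$ embeds into $G_{\nu+1}^{d}$ (via $\psi$) in a way that is compatible with the pseudonorms, so that large balls in $G_\nu$ are forced to come from balls in $G_{\nu+1}$ that are not much larger; this will transfer the growth-rate inequality. First I would fix $\nu$ and recall that, because the action of $G_\nu$ on the first level is transitive and $G_{\nu+1}$ acts transitively on every level, we may find a fixed element $t \in G_\nu$ with $t = (t_1, \dots, t_d)\tau$ where $\tau$ is a $d$-cycle (or at least an element moving $1$ through all of $X$); the key point is that $t$ has bounded pseudonorm, say $|t|_\nu \le c$ for a constant $c$ depending only on the family. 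Using $t$ and its powers, any $g \in \St_{G_\nu}(1)$ can be reached, and more usefully, any $g \in G_\nu$ with $|g|_\nu \le n$ can be written as $g = h \rho$ with $\rho$ in the finite group generated by rooted permutations appearing in $S_\nu$ and $h \in \St_{G_\nu}(1)$ satisfying $|h|_\nu \le n + O(1)$.

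Next I would use the non-$\ell_1$-expanding hypothesis to control $\psi(h) = (h_1, \dots, h_d)$: by definition $\sum_{i=1}^d |h_i|_{\nu+1} \le |h|_\nu \le n + O(1)$. The map $h \mapsto \psi(h)$ on $\St_{G_\nu}(1)$ is injective (the maps $\psi_n$ are injective on subgroups, as noted in the excerpt), so the number of such $h$ with $|h|_\nu \le n$ is bounded by the number of tuples $(h_1, \dots, h_d) \in G_{\nu+1}^d$ with $\sum |h_i|_{\nu+1} \le n + O(1)$. The count of such tuples is at most $\sum_{k_1 + \dots + k_d \le n + O(1)} \prod_i |\Omega_{G_{\nu+1}, S_{\nu+1}, |\cdot|_{\nu+1}}(k_i)|$, which, up to a polynomial-in-$n$ factor coming from the number of compositions, is bounded by $(n+O(1))^{d} \cdot \gamma_{\nu+1}(n + O(1))^{d}$ — or more crudely just $\gamma_{\nu+1}(n+O(1))^d$ times a polynomial. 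Combining with the bounded contribution of the rooted part $\rho$, I get
\[
\gamma_\nu(n) \le C_1 \cdot (n+1)^{d} \cdot \gamma_{\nu+1}(n + C_0)^{d}
\]
for constants $C_0, C_1$ depending only on $\nu$ and the family.

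Finally I would take $n$-th roots and pass to the limit. Raising to the power $1/n$: the factor $C_1^{1/n} \to 1$, the polynomial factor $(n+1)^{d/n} \to 1$, and $\gamma_{\nu+1}(n+C_0)^{d/n} = \left(\gamma_{\nu+1}(n+C_0)^{1/(n+C_0)}\right)^{d(n+C_0)/n} \to \kappa_{\nu+1}^{d}$ by Proposition~\ref{prop:GrowthRateIsWellDefined}. Hence $\kappa_\nu \le \kappa_{\nu+1}^{d}$. To sharpen this to $\kappa_\nu \le \kappa_{\nu+1}$, I would instead count more carefully using spheres: split the budget $\sum k_i \le n$ and use Proposition~\ref{prop:SphericalGrowthRateIsTheSame} together with submultiplicativity of sphere sizes — the product $\prod_i |\Omega_{\nu+1}(k_i)|$ over a composition of $n$ is, by submultiplicativity, at most $|\Omega_{\nu+1}(n)|$ up to the polynomial count of compositions, so in fact $\gamma_\nu(n) \le C_1 (n+1)^d \gamma_{\nu+1}(n + C_0)$, giving $\kappa_\nu \le \kappa_{\nu+1}$ directly. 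The main obstacle I anticipate is the bookkeeping in the first step — handling the rooted permutation part cleanly and making sure the passage from an arbitrary $g$ to an element of $\St_{G_\nu}(1)$ of comparable pseudonorm only costs an additive constant, so that it disappears after taking roots. Everything else is routine estimation once that normal form is in place.
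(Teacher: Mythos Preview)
Your sharpening step contains a genuine error: submultiplicativity of sphere sizes goes the \emph{wrong} way for what you need. Submultiplicativity says $|\Omega_{\nu+1}(k_1+\dots+k_d)| \le \prod_i |\Omega_{\nu+1}(k_i)|$, so it gives a \emph{lower} bound on the product $\prod_i |\Omega_{\nu+1}(k_i)|$, not the upper bound you claim. Hence the inequality $\gamma_\nu(n) \le C_1(n+1)^d \gamma_{\nu+1}(n+C_0)$ does not follow, and you are stuck with only $\kappa_\nu \le \kappa_{\nu+1}^d$, which is too weak. (In general one cannot expect $\prod_i \gamma(r_i) \le C\,\gamma(\sum_i r_i)$ with a uniform constant $C$: that would force $\gamma(n)/\kappa^n$ to be bounded, which need not hold.)

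The paper's proof sidesteps this completely. It writes $\gamma_{\nu+1}(r) = C(r)\,\kappa_{\nu+1}^{r}$, so that $\prod_i \gamma_{\nu+1}(r_i) = \bigl(\prod_i C(r_i)\bigr)\,\kappa_{\nu+1}^{\sum r_i} \le C(s(n))^d\,\kappa_{\nu+1}^{n}$ for $\sum r_i \le n$, where $s(n)$ is the argmax of $C$ on $\{1,\dots,n\}$. Summing over the at most $n^d$ compositions and taking $n$-th roots, the factors $d!$, $n^d$ and $C(s(n))^d$ all disappear because $C(k)^{1/k}\to 1$. This yields $\kappa_\nu \le \kappa_{\nu+1}$ directly. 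Note also that your first step is unnecessarily elaborate: there is no need to find a special element $t$ or to reduce to $\St_{G_\nu}(1)$. Every $g\in G_\nu$ already decomposes uniquely as $(g_1,\dots,g_d)\tau$ with $\tau\in\Sym(d)$ and $g_i\in G_{\nu+1}$ (by the similar-family hypothesis), and this decomposition is injective, so one gets $\gamma_\nu(n) \le d!\sum_{r_1+\dots+r_d\le n}\prod_i \gamma_{\nu+1}(r_i)$ in one line.
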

\begin{proof}
Let $n\in \NN$ be greater than $d$ and let $g\in G_\nu$ be such that $|g|_\nu \leq n$. We have
\[g=(g_1,g_2,\dots,g_d)\tau\]
with $g_1,g_2,\dots, g_d\in G_{\nu+1}$, $\tau \in \Sym(d)$ and
\[\sum_{i=1}^d|g_i|_{\nu+1} \leq |g|_\nu = n.\]
Since $g$ is determined by $g_1,g_2,\dots, g_d$ and $\tau$, we have
\[\gamma_\nu(n) \leq d!\sum_{r_1+r_2+\dots + r_d \leq n}\gamma_{\nu+1}(r_1)\gamma_{\nu+1}(r_2)\dots \gamma_{\nu+1}(r_d).\]

Let $C(k) = \frac{\gamma_{\nu+1}(k)}{\kappa_{\nu+1}^{k}}$ for any $k\in \NN$.We have
\begin{align*}
\gamma_{\nu}(n) &\leq d!\sum_{r_1+r_2+\dots + r_d \leq n}C(r_1)\kappa_{\nu+1}^{r_1}C(r_2)\kappa_{\nu+1}^{r_2}\dots C(r_d)\kappa_{\nu+1}^{r_d} \\
&=d!\kappa_{\nu+1}^n \sum_{r_1+r_2+\dots + r_d \leq n}C(r_1)C(r_2)\dots C(r_d).
\end{align*}
Let $s(n)\in \{1,\dots, n\}$ be such that $C(s(n)) \geq C(r)$ for all $1\leq r \leq n$. We then have
\begin{align*}
\gamma_{\nu}(n) &\leq d!\kappa_{\nu+1}^n \sum_{r_1+r_2+\dots + r_d \leq n}C(s(n))^d \\
&\leq d! \kappa_{\nu+1}^n C(s(n))^d n^{d}
\end{align*}
It is clear from the definition that the sequence $s(n)$ is non-decreasing. Therefore, either it stabilizes or it goes to infinity. Since $\lim_{k\to\infty}C(k)^{\frac{1}{k}}=1$, in both cases we have $\lim_{n\to\infty}C(s(n))^{\frac{1}{n}}=1$. Hence,
\begin{align*}
\kappa_\nu &=\lim_{n\to\infty} \gamma_\nu(n)^\frac{1}{n} \\
&\leq \kappa_{\nu+1} \lim_{n\to\infty} d!^{\frac{1}{n}} C(s(n))^{\frac{1}{n}} n^{\frac{d}{n}} \\
&=\kappa_{\nu+1}.
\end{align*}
\end{proof}

\subsection{Examples}

Let us now present some examples of non-$\ell_1$-expanding similar families of groups of automorphisms of $T_d$.

\subsubsection{Spinal groups}\label{subsubsection:SpinalGroups}

Spinal groups were first introduced and studied, in a more restrictive version, by Bartholdi and \v{S}uni\'{k} in \cite{BartholdiSunic01}. A more general version was later introduced by Bartholdi, Grigorchuk and \v{S}uni\'{k} in \cite{BartholdiGrigorchukSunic03}. Spinal groups form a large family of groups which include many previously studied examples, such as the Grigorchuk groups and the Gupta-Sidki group. Note that the definition we give here is not the most general one, because we consider only regular rooted trees.

Let $B$ be a finite group and
\[\Omega = \left\{\{\omega_{ij}\}_{i\in\NN, 1\leq j < d} \mid \omega_{ij}\in\Hom(B,\Sym(d)), \bigcap_{i\geq k}\bigcap_{j=1}^{d-1} \ker\omega_{ij} = \{1\} \forall k \in \NN\right\}\]
be the set of sequences of homomorphisms from $B$ to $\Sym(d)$ such that the intersection of the kernels is trivial no matter how far into the sequence we start. Let
\begin{align*}
\sigma\colon \Omega&\rightarrow \Omega \\
\omega = \{\omega_{ij}\}_{i\in\NN, 1\leq j \leq d - 1} &\mapsto \sigma(\omega) = \{\omega_{(i+1)j}\}_{i\in\NN, 1\leq j \leq d - 1}
\end{align*}
be the \emph{left-shift} (with respect to the first index), which is well-defined thanks to the way the condition on the kernels was formulated.

For each $\omega = \{\omega_{ij}\}_{i\in\NN, j\in\{1,2,\dots,d-1\}} \in \Omega$, we can recursively define a homomorphism
\begin{align*}
\beta_\omega \colon G_B &\rightarrow \Aut(T_d) \\
b &\mapsto (\omega_{01}(b), \omega_{02}(b),\dots, \omega_{0(d-1)}(b), \beta_{\sigma(\omega)}(b))
\end{align*}
where, as usual, we identify $\Sym(d)$ with rooted automorphisms of $T_d$. The condition on the kernels of sequences in $\Omega$ ensures that this homomorphism is injective. Let us write $B_\omega = \beta_\omega(B)\leq \Aut(T_d)$.

For a fixed $\omega = \{\omega_{ij}\}\in \Omega$, let $A_\omega\leq \Sym(d)$ be any subgroup of $\Sym(d)$. For any $k\in\NN^*$, we then define
\[A_{\sigma^k(\omega)} = \left\langle\bigcup_{j=1}^{d}\omega_{kj}(B)\right\rangle.\]

\begin{defn}
Using the notation above, the group $G_\omega = \langle A_\omega, B_\omega \rangle$ for some $\omega\in \Omega$ and $A_\omega \leq \Sym(d)$ is a \emph{spinal group} if $A_{\sigma^k(\omega)}$ acts transitively on $\{1,2,\dots,d\}$ for all $k\in \NN$.
\end{defn}

\begin{remark}
For $\omega\in \Omega$ and $A_\omega\leq \Sym(d)$, if $G_\omega = \langle A_\omega, B_\omega \rangle$ is a spinal group, then $G_{\sigma^k(\omega)} = \langle A_{\sigma^k(\omega)}, B_{\sigma^k(\omega)}\rangle$ is a spinal group for all $k\in \NN$.
\end{remark}

For any spinal group $G_\omega$, the set $S_\omega = A_\omega \cup B_\omega$ is a finite symmetric generating set. Let $|\cdot|_\omega \colon S_\omega \rightarrow \{0,1\}$ be defined by
\[|g|_\omega = 
\begin{cases}
0 & \text{ if } g\in A_\omega \\
1 & \text{ otherwise.}
\end{cases}\]
It is clear from the definition that if $g\in S_\omega$, we have
\[g=(g_1,g_2,\dots,g_d)\tau\]
with $\tau\in A_\omega$, $g_1,g_2,\dots, g_d\in S_{\sigma(\omega)}$ and
\[\sum_{i=1}^d|g_i|_{\sigma(\omega)} = |g|_\omega.\]
As explained above, $|\cdot|_\omega$ can be extended to a word pseudonorm on $G_\omega$ that we will also denote by $|\cdot|_\omega$. The set of elements of length $0$ in this pseudonorm is exactly $A_\omega$, which is finite, and since it is true for the generators, we have that for $g\in G_\omega$,
\[g=(g_1,g_2,\dots,g_d)\tau\]
with $\tau\in A_\omega$, $g_1,g_2,\dots, g_d\in G_{\sigma(\omega)}$ and
\[\sum_{i=1}^d|g_i|_{\sigma(\omega)} \leq |g|_\omega.\]
Hence,
\[\{G_\omega, S_\omega, |\cdot|_\omega\}_{\omega\in\Omega}\]
is a non-$\ell_1$-expanding similar family of groups.

\begin{ex}[The first Grigorchuk group]
Let $d=2$, $A = \Sym(2)\cong \ZZ/2\ZZ$ and $B = (\ZZ/2\ZZ)^2$. Let $a$ be the non-trivial element of $A$ and $b,c,d$ be the non-trivial elements of $B$. For $x\in \{b,c,d\}$, let $\omega_{x} \colon B\rightarrow A$ be the epimorphism that sends $x$ to $1$ and the other two non-trivial letters to $a$. The group $G_\omega$ with $\omega = \omega_d\omega_c\omega_b\omega_d\omega_c\omega_b\dots$ (here, since $d-1 = 1$, there is only one index) and $A_\omega=A$ is the first Grigorchuk group, which was first introduced in \cite{Grigorchuk80}.
\end{ex}
\begin{ex}[Grigorchuk groups]
More generally, let $d=p$, where $p$ is a prime number, $A = \langle(1 2 \dots p)\rangle\cong \ZZ/p\ZZ$ and $B=(\ZZ/p\ZZ)^2$. Let
\begin{align*}
\phi_k\colon (\ZZ/p\ZZ)^2 &\rightarrow \ZZ/p\ZZ \\
(x,y) &\mapsto x+ky
\end{align*}
for $0\leq k \leq p-1$ and let
\begin{align*}
\phi_p\colon (\ZZ/p\ZZ)^2 &\rightarrow \ZZ/p\ZZ \\
(x,y) &\mapsto y.
\end{align*}
The groups $G_\omega$ with $A_\omega = A$ and $\omega=\{\omega_{ij}\}_{i\in\NN, 1\leq j \leq p-1} \in \Omega$ such that $\omega_{i1} =\phi_{k_i}$ for all $i\in\NN$ and $\omega_{ij} = 1$ if $j\ne 1$ are called Grigorchuk groups and were studied in \cite{Grigorchuk85}.
\end{ex}
\begin{ex}[\v{S}uni\'{k} groups]\label{ex:SunicGroups}
Let $d=p$ a prime number, $A = \langle(1 2 \dots p)\rangle\cong \ZZ/p\ZZ$ and $B=(\ZZ/p\ZZ)^m$ for some $m\in \NN$. Let $\phi\colon (\ZZ/p\ZZ)^{m}\rightarrow \ZZ/p\ZZ$ be the epimorphism given by the matrix
\[\begin{pmatrix}
0&0&\dots&0&1
\end{pmatrix}\]
in the standard basis, $\rho\colon (\ZZ/p\ZZ)^m \rightarrow (\ZZ/p\ZZ)^m$ be the automorphism given by
\[\begin{pmatrix}
0&0&\dots & 0 & -1 \\
1 & 0 & \dots & 0& a_1 \\
0&1&\dots&0& a_2 \\
\vdots & \vdots & \ddots & \vdots & \vdots \\
0&0&\dots&1 & a_{m-1}
\end{pmatrix}\]
in the standard basis, for some $a_1,\dots, a_{m-1}\in \ZZ/p\ZZ$, and $\omega=\{\omega_{ij}\} \in \Omega$, where
\[\omega_{i1} = \phi\circ \rho^i\]
and $\omega_{ij} = 1$ if $j\ne 1$. For any such $\omega$, we let $A_\omega = A$.

The groups $G_\omega$ that can be constructed in this way are exactly the groups that were introduced and studied by \v{S}uni\'{k} in \cite{Sunic07}.
\end{ex}

\begin{ex}[GGS groups]
GGS groups form another important family of examples of spinal groups. They are a generalization of the second Grigorchuk group (introduced in \cite{Grigorchuk80}) and the groups introduced by Gupta and Sidki in \cite{GuptaSidki83}. We present here the definition of GGS groups that was given in \cite{BartholdiGrigorchukSunic03}.

Let $A=\langle(1 2 \dots d)\rangle \cong \ZZ/d\ZZ$, $B=\ZZ/d\ZZ$ and $\epsilon = (\epsilon_1,\epsilon_2,\dots, \epsilon_{d-1}) \in (\ZZ/d\ZZ)^d$ such that $\epsilon \ne 0$. Let $\omega = \{\omega\}_{ij}\in \Omega$, where
\[\omega_{ij}(\bar{1}) = a^{\epsilon_j}\]
$\bar{1}\in B$ is the equivalence class of $1$ and $a=(1 2 d) \in A$, and let $A_\omega = A$. If $\gcd(\epsilon_1,\epsilon_2,\dots, \epsilon_{d-1},d) = 1$, then $G_\omega$ is called a GGS group.
\end{ex}

\subsubsection{Nekrashevych's family of groups $\mathcal{D}_\omega$}

Let $\{0,1\}^{\NN}$ be the set of infinite sequences of $0$ and $1$ and
\begin{align*}
\sigma\colon \{0,1\}^{\NN} &\rightarrow \{0,1\}^{\NN} \\
\omega_0\omega_1\omega_2\dots &\mapsto \omega_1\omega_2\omega_3\dots
\end{align*}
be the left-shift. For $\omega = \omega_0\omega_1\omega_2\dots\in \{0,1\}^{\NN}$, we can recursively define automorphisms $\beta_\omega, \gamma_\omega \in \Aut(T_2)$ by
\begin{align*}
\beta_\omega &= (\alpha, \gamma_{\sigma(\omega)})\\
\gamma_\omega &= 
\begin{cases}
(\beta_{\sigma(\omega)}, 1) & \text{ if } \omega_0=0 \\
(1, \beta_{\sigma(\omega)}) & \text{ if } \omega_0=1
\end{cases}
\end{align*}
where $\alpha\in \Aut(T_2)$ is the non-trivial rooted automorphism of $T_2$. We can then define the group $\mathcal{D}_\omega = \langle \alpha, \beta_\omega, \gamma_\omega \rangle$. This family of groups was first studied by Nekrashevych in \cite{Nekrashevych07}.

It follows from the definition that $\alpha^2=\beta_\omega^2 = \gamma_\omega^2 = 1$. Hence, the set $S_\omega = \{\alpha, \beta_\omega, \gamma_\omega\}$ is a finite symmetric generating set of $\mathcal{D}_\omega$. Let $|\cdot|_\omega \colon S\rightarrow \{0,1\}$ be given by $|\alpha|_\omega = 0$, $|\beta_\omega|_\omega = |\gamma_\omega|_\omega = 1$. Then, the family $\{(G_\nu, S_\nu, |\cdot|_\nu)\}_{\nu\in\NN}$ is a non-$\ell_1$-expanding similar family of automorphisms of $T_2$, where $G_\nu = \mathcal{D}_{\sigma^\nu(\omega)}$, $S_\nu = S_{\sigma^\nu(\omega)}$ and $|\cdot|_\nu = |\cdot|_{\sigma^\nu(\omega)}$.

\subsubsection{Peter Neumann's example}

We present here a group that first appeared as an example in Neumann's paper \cite{Neumann86}. The description we use here is based on \cite{BartholdiGrigorchukSunic03}.

Let $A=\Alt(6)$ and $X=\{1,2,\dots,6\}$. For every couple $(a,x)\in A\times X$ such that $x$ is a fixed point of $a$, we can recursively define an automorphism of $\Aut(T_6)$ by
\[b_{(a,x)} = (1,\dots, b_{(a,x)},\dots, 1)a\]
where the $b_{(a,x)}$ is in the x\textsuperscript{th} position. Let
\[S=\left\{b_{(a,x)} \in \Aut(T_6) \mid (a,x)\in A\times X \right\},\]
$G=\langle S \rangle$ and $|\cdot|\colon G \rightarrow \NN$ be the word norm associated to $S$. Then, it is clear from the definition that $\{(G_\nu,S_\nu,|\cdot|_\nu)\}_{\nu\in\NN}$ is a non-$\ell_1$-expanding similar family of automorphisms of $T_6$, where $G_\nu=G$, $S_\nu=S$ and $|\cdot|_\nu = |\cdot|$ for all $\nu\in \NN$. Hence, $G$ is a non-$\ell_1$-expanding self-similar group.

\subsection{Incompressible elements}

Let $\{(G_\nu, S_\nu, |\cdot|_\nu)\}_{\nu\in\NN}$ be a non-$\ell_1$-expanding similar family of groups of automorphisms of $T_d$. For any $k\in \NN^*$, we recursively define the sets $\II_k^{\nu}$ of elements of $G_\nu$ which have no length reduction up to level $k$ as
\[\II_k^{\nu} = \{g = (g_1,g_2,\dots, g_d)\tau \in G_\nu \mid g_1,g_2,\dots, g_d \in \II_{k-1}^{\nu+1}, \sum_{i=1}^d|g_i|_{\nu+1} = |g|_\nu\}\]
where $\II_0^\nu = G_\nu$ for all $\nu\in \NN$.

We will call the set
\[\II_\infty^\nu = \bigcap_{k=1}^\infty \II_k^\nu\]
the set of incompressible elements of $G_\nu$. This is the set of elements which have no length reduction on any level.

\subsection{Growth of incompressible elements}

We will see that if every group in a non-$\ell_1$-expanding similar family of groups of automorphisms of $T_d$ is generated by incompressible elements and the sets of incompressible elements grow uniformly subexponentially, then the groups themselves are also of subexponential growth. This result is a generalization of the first part of Proposition 5 in \cite{BartholdiPochon09}. The main difference is that we show here that under our assumptions, it is sufficient to look at the growth of the set $\II_\infty^\nu$ of incompressible elements instead of the set $\II_k^\nu$ of elements which have no reduction up to level $k$ for some $k\in \NN$.

\begin{thm}\label{thm:GrowthCriterion}
Let $A\in\NN$ be an integer, $\{(G_\nu, S_\nu, |\cdot|_\nu)\}_{\nu\in\NN}$ be a non-$\ell_1$-expanding similar family of automorphisms of $T_d$ such that $S_\nu \subseteq \II_\infty^\nu$ and $|S_\nu|\leq A$ for every $\nu\in\NN$, and let $\Omega_\nu(n)$ be the sphere of radius $n\in\NN$ in $G_\nu$ with respect to the pseudometric $|\cdot|_\nu$. If there exists a subexponential function $\delta\colon \NN\rightarrow\NN$ with $\ln(\delta)$ concave such that for infinitely many $\nu\in\NN$, $\II_\infty^\nu\cap\Omega_\nu(n) \leq \delta(n)$ for all $n\in \NN$, then the groups $G_\nu$ are of subexponential growth for every $\nu\in\NN$.
\end{thm}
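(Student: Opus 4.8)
The plan is to bound the growth function $\gamma_\nu$ of each $G_\nu$ in terms of the counting function $\delta$ for incompressible elements, by induction on the ``depth'' to which we decompose an element down the tree. The key observation is that any $g\in G_\nu$ of pseudonorm $n$ either is already incompressible (and then there are at most $\delta(n)$ of them on the sphere of radius $n$, for the infinitely many good $\nu$), or it has a strict length reduction at some level $k$; in the latter case we can write $g$ (up to a choice of an element of the finite top portion $\Sym(d)\wr\dots\wr\Sym(d)$, which by properness contributes only a bounded-by-$C^{d^k}$ factor, absorbed later) as a tuple $(g_1,\dots,g_{d^k})$ with each $g_i\in G_{\nu+k}$ and $\sum_i |g_i|_{\nu+k} \le n-1$. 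So I would set up a recursion: fix a good $\nu$; for $g\in\Omega_\nu(n)$ let $k(g)$ be the smallest level at which the projection strictly reduces length (so $k(g)=\infty$ iff $g$ is incompressible), and split the sphere according to $k(g)$. Summing a product of the $\gamma_{\nu+k}$ over compositions $r_1+\dots+r_{d^k}\le n-1$ gives, as in the proof of Proposition~\ref{prop:GrowthRateCannotDecrease}, a bound of the shape $(d!)^{\text{stuff}}\cdot \big(\max_r \gamma_{\nu+k}(r)^{1/r}\big)^{n}\cdot(\text{polynomial})$. Since all the $\gamma_{\nu+k}$ have exponential growth rate $\le \kappa_\infty := \sup_\nu\kappa_\nu$ (which is finite — it is $\le$ the rate of the free product of $|S_\nu|\le A$ copies of $\ZZ/2$, say, or one argues it is $\le \kappa_{\nu_0}$ for a good $\nu_0$ by Proposition~\ref{prop:GrowthRateCannotDecrease}), one gets a self-referential inequality $\kappa_\nu \le \kappa_\infty$ that we must upgrade to $\kappa_\infty = 1$.

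The heart of the argument is to show $\kappa_\nu = 1$ for the good $\nu$, from which $\kappa_\mu=1$ for all $\mu\le\nu$ by Proposition~\ref{prop:GrowthRateCannotDecrease}, and then for all $\mu$ since the good $\nu$ are infinite (so every $\mu$ lies below some good $\nu$). To do this I would estimate $\gamma_\nu(n)$ by iterating the decomposition: choose a threshold level $K$ (to be optimized with $n$), and for each $g\in\Omega_\nu(n)$ follow it down the tree, at each vertex of each level either stopping because the projection there became incompressible or because the subtree-norm dropped to a small bounded value, but never descending past level $K$. Because the family is non-$\ell_1$-expanding, the sum of the norms of the $d^K$ level-$K$ projections of $g$ is at most $n$; by a pigeonhole/convexity argument (here is where concavity of $\ln\delta$ enters) the number of ways to distribute total norm $n$ among $d^K$ incompressible pieces, each counted by $\delta$, is at most $\delta(n/d^K)^{d^K}\cdot(\text{number of compositions})$, and $\delta(n/d^K)^{d^K}$ is subexponential in $n$ uniformly in $K$ precisely because $\ln\delta$ is concave with $\ln\delta(m)=o(m)$: concavity gives $\tfrac{1}{d^K}\sum_i \ln\delta(r_i)\le \ln\delta(n/d^K)$, hence $\sum_i\ln\delta(r_i)\le d^K\ln\delta(n/d^K)\le n\cdot\frac{\ln\delta(n/d^K)}{n/d^K}\to 0$ as we send $n,K\to\infty$ with $n/d^K\to\infty$. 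Combining with the bounded factors from the $\Sym(d)$-parts at each visited vertex — there are at most $d^0+d^1+\dots+d^{K}\le 2d^K$ such vertices, contributing a factor $(d!)^{2d^K}$ which is $e^{o(n)}$ once $d^K = o(n/\log n)$ — yields $\gamma_\nu(n)\le e^{o(n)}$, i.e.\ $\kappa_\nu\le 1$, hence $\kappa_\nu=1$ since $G_\nu$ is infinite.

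The main obstacle, and the step requiring the most care, is making the iterated decomposition bookkeeping precise: one must choose, for each $g$, a well-defined finite ``cut set'' of vertices (an antichain in $T_d$ not below level $K$) at which $g$'s projection is either incompressible or of bounded norm, show that $g$ is determined by the data of this antichain together with the incompressible/bounded projections at its vertices and the finitely many rooted-automorphism decorations above it, and then count this data. The non-$\ell_1$-expanding hypothesis is exactly what guarantees the total norm is preserved (does not blow up) as we pass to the antichain, so that the counting sum is over compositions of $n$ and not of something larger; and the hypothesis $S_\nu\subseteq\II_\infty^\nu$ is what ensures that following the tree down until incompressibility actually terminates in a controlled way rather than producing generators whose own decompositions must be analyzed further. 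Once the cut set is correctly defined, the remaining estimates are the convexity computation sketched above together with the elementary bound on the number of compositions of $n$ into a prescribed number of parts, both routine. A cleaner alternative I would try first is to prove the weaker-looking statement $\kappa_\nu\le\delta$-rate directly by a single-level recursion plus Proposition~\ref{prop:GrowthRateCannotDecrease} and a limiting argument in $\nu$, deducing $\kappa_\infty\le\lim_n\delta(n)^{1/n}=1$; this avoids the multi-level antichain entirely and may suffice, with the concavity of $\ln\delta$ used only to run the limit cleanly.
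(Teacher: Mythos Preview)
Your tree-decomposition approach has a genuine gap: it does not produce contraction. When you project $g\in\Omega_\nu(n)$ to an antichain of depth at most $K$, the non-$\ell_1$-expanding hypothesis only gives that the total norm of the projections is at most $n$; nothing forces this sum to be at most $(1-c)n$ for some fixed $c>0$ at a level $K$ independent of $n$. If you take $K$ large enough that all projections have norm $\le 1$ (hence are incompressible or trivial), you need $d^K\gtrsim n$, and then the rooted-automorphism factor $(d!)^{O(d^K)}$ is already $e^{\Omega(n)}$. If instead $d^K=o(n)$, many of the level-$K$ projections can be arbitrary (neither incompressible nor short) elements of $G_{\nu+K}$; counting those by $\gamma_{\nu+K}$ merely reproduces $\kappa_\nu\le\kappa_{\nu+K}$, i.e.\ Proposition~\ref{prop:GrowthRateCannotDecrease}. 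Your ``cleaner alternative'' has the same defect: a single-level recursion gives no contraction, and note that $g\notin\II_\infty^\nu$ does not imply reduction at level~$1$, so one cannot even split off $\II_\infty^\nu\cap\Omega_\nu(n)$ against a level-$1$ remainder. The role you assign to the hypothesis $S_\nu\subseteq\II_\infty^\nu$ --- making the tree descent terminate --- is also not what that hypothesis accomplishes; the projections of $g$ down the tree have nothing to do with how $g$ factors through generators.

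The paper's proof uses a different idea that supplies exactly the missing contraction. One factors $g$ \emph{inside $G_\nu$} as a product $g=g_1\cdots g_N$ of incompressible elements with $\sum_i|g_i|_\nu=|g|_\nu$ and $N=N(g)$ minimal; this is where $S_\nu\subseteq\II_\infty^\nu$ is actually used, to guarantee such a factorization exists. The sphere is split at $N(g)\lessgtr\epsilon n$. When $N(g)\le\epsilon n$, a direct count by $\delta$ and concavity gives a rate tending to $1$ as $\epsilon\to0$. When $N(g)>\epsilon n$, one pairs consecutive factors $h_i=g_{2i-1}g_{2i}$; by minimality of $N$ each $h_i$ is \emph{not} incompressible, and a pigeonhole argument shows at least $\epsilon n/8$ of them satisfy $|h_i|_\nu\le 6/\epsilon$. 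Since there are only finitely many elements of norm at most $6/\epsilon$, there is a single level $l=l_\nu(6/\epsilon)$, independent of $n$, by which every such short $h_i$ has lost at least one unit of length. This gives total norm at level $l$ at most $(1-\epsilon/8)n$, hence $\kappa_\nu\le\kappa_{\nu+l}^{(8-\epsilon)/8}$; iterating over the infinitely many good $\nu$ and letting $\epsilon\to0$ forces $\kappa_\nu=1$. The factorization-and-pairing step is the heart of the argument and has no analogue in your proposal.
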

\begin{proof}
The proof is inspired by the one found in \cite{BartholdiPochon09}, with a few key modifications. The idea is to split the set $\Omega_\nu(n)$ in two, the set of elements which can be written as a product of a few incompressible elements and the set of elements which can only be written as a product of a large number of incompressible elements. The first set grows slowly because there are few incompressible elements, and the second set grows slowly because there is a significant amount of length reduction.

Let us fix $\nu\in \NN$ such that $\II_\infty^\nu\cap\Omega_\nu(n) \leq \delta(n)$ for all $n\in \NN$. In what follows, we will show that $\kappa_\nu = 1$. By Proposition \ref{prop:GrowthRateCannotDecrease}, this will show that $\kappa_{\nu'} =1$ for all $\nu'\leq \nu$.

Since $S_\nu\subseteq \II_\infty^\nu$, we have that for every $g\in G_\nu$, the set
\[\left\{N\in\NN \mid g=g_1g_2\dots g_N, g_i\in \II_{\infty}^\nu, \sum_{i=1}^N|g_i|_\nu = |g|_\nu\right\}\]
is not empty. Hence, we can define
\[N(g) = \min\left\{N\in\NN \mid g=g_1g_2\dots g_N, g_i\in \II_{\infty}^\nu, \sum_{i=1}^N|g_i|_\nu = |g|_\nu\right\}.\]

For any $n\in \NN$ and $0<\epsilon < 1$, the sphere of radius $n$ in $G_\nu$, $\Omega_\nu(n)$, can be partitioned in two by the subsets
\begin{align*}
\Omega_{\nu}^{>}(n,\epsilon) &= \{g\in\Omega_\nu(n) \mid N(g) > \epsilon n\} \\
\Omega_{\nu}^{<}(n,\epsilon) &= \{g\in\Omega_\nu(n) \mid N(g) \leq \epsilon n\}.
\end{align*}

Let $g\in \Omega_\nu^{>}(n,\epsilon)$. By definition of $N(g)$, there exists $g_1,g_2,\dots, g_{N(g)}\in \II_\infty^{\nu}$ such that $g=g_1g_2\dots g_{N(g)}$ and $\sum_{i=1}^{N(g)}|g_i|_\nu = |g|_\nu$. Let $h_i = g_{2i-1}g_{2i}$ for $1\leq i \leq \lfloor\frac{N(g)}{2}\rfloor$. Then,
\[g=\begin{cases}h_1h_2\dots h_{\frac{N(g)-1}{2}}g_{N(g)} & \text{ if } N(g) \text{ is odd} \\
h_1h_2\dots h_{\frac{N(g)}{2}} & \text{ if } N(g) \text{ is even}. \end{cases}\]
Notice that since
\[|g|_\nu = \sum_{i=1}^{N(g)}|g_i|_\nu,\]
we must have $|h_i|_\nu = |g_{2i-1}|_\nu + |g_{2i}|_\nu $. Hence, no $h_i$ can be in $\II_\infty^\nu$ (otherwise, this would contradict the minimality of $N(g)$).

Let
\[S(g) = \left\{i \biggm| |h_i|_\nu \leq \frac{6}{\epsilon}\right\}\]
be the set of "small" factors of $g$ and
\[L(g) = \left\{i \biggm| |h_i|_\nu > \frac{6}{\epsilon}\right\}\]
be the set of "large" factors. Clearly, $|S(g)| + |L(g)| = \lfloor \frac{N(g)}{2}\rfloor$. Since $g\in \Omega_\nu^{>}(n,\epsilon)$, $N(g)$ is not too small compared to $n$, which implies that as long as $n$ is large enough, more than half of the factors of $g$ must be small. More precisely, if $n>\frac{3}{\epsilon}$, then $|S(g)| \geq \frac{1}{2}\lfloor \frac{N(g)}{2}\rfloor$. Indeed, it that were not the case, then we would have $|L(g)|> \lfloor \frac{N(g)}{2}\rfloor$, so
\begin{align*}
n &\geq \sum_{i=1}^{\lfloor \frac{N(g)}{2}}|h_i|_\nu \geq \sum_{h_i\in L(g)} |h_i|_\nu \\
&> \frac{1}{2}\left\lfloor\frac{N(g)}{2}\right\rfloor \frac{6}{\epsilon} \geq \frac{\left(N(g)-1\right)}{4}\frac{6}{\epsilon} \\
&> \frac{3}{2}n - \frac{3}{2\epsilon} \\
&> n
\end{align*}
which is a contradiction. Therefore, if $n>\frac{3}{\epsilon}$,
\begin{align*}
|S(g)| &\geq \frac{1}{2}\left\lfloor \frac{N(g)}{2} \right\rfloor \geq \frac{N(g)-1}{4} \\
&>\frac{\epsilon}{4}n - \frac{1}{4}\\
&> \frac{\epsilon}{8}n.
\end{align*}
This means that the number of small factors is comparable with $n$. This is important because, as we will see, every small factor gives us some length reduction on a fixed level (fixed in the sense that it does not depend on $n$, but only on $\epsilon$). Hence, on this level, we will see a large amount of length reduction.

For $r\in \RR$, let
\[l_\nu(r) = \max\{k\in\NN \mid \left(G_\nu \setminus \II_\infty^\nu\right)\cap B_\nu(r) \subseteq \II_k^\nu\} + 1\]
where $B_\nu(r)$ is the ball of radius $r$ in $G_\nu$. Notice that since $\left(G_\nu \setminus \II_\infty^\nu\right)\cap B_\nu(r)$ is finite and not contained in $\II_\infty^\nu$, $l_\nu(r)$ is well-defined (i.e. finite).

Let us consider the $l_\nu(\frac{6}{\epsilon})\textsuperscript{th}$-level decomposition of $g$,
\[g=(g_{11\dots 1}, g_{11\dots 2}, \dots, g_{dd\dots d})\tau.\]
Since
\[g=\begin{cases}h_1h_2\dots h_{\frac{N(g)-1}{2}}g_{N(g)} & \text{ if } N(g) \text{ is odd} \\
h_1h_2\dots h_{\frac{N(g)}{2}} & \text{ if } N(g) \text{ is even}, \end{cases}\]
we have
\[\sum_{j\in X^{l_\nu(\frac{6}{\epsilon})}} |g_j|_{\nu + l_\nu(\frac{6}{\epsilon})} \leq
\left(\sum_{i=1}^{\frac{N(g)-1}{2}}\sum_{j\in X^{l_\nu(\frac{6}{\epsilon})}}|h_{i,j}|_{\nu + l_\nu(\frac{6}{\epsilon})}\right) + \sum_{j\in X^{l_\nu(\frac{6}{\epsilon})}}|g_{N(g),j}|_{\nu + l_\nu(\frac{6}{\epsilon})}\]
if $N(g)$ is odd and
\[\sum_{j\in X^{l_\nu(\frac{6}{\epsilon})}} |g_j|_{\nu + l_\nu(\frac{6}{\epsilon})} \leq
\sum_{i=1}^{\frac{N(g)}{2}}\sum_{j\in X^{l_\nu(\frac{6}{\epsilon})}}|h_{i,j}|_{\nu + l_\nu(\frac{6}{\epsilon})}\]
if $N(g)$ is even, where $X^{l_\nu(\frac{6}{\epsilon})}$ is the set of words of length $l_\nu(\frac{6}{\epsilon})$ in the alphabet $\{1,2,\dots, d\}$,
\[h_i = (h_{i,11\dots 1}, h_{i, 11\dots 2}, \dots, h_{i, dd\dots d})\tau_i\]
is the $l_\nu(\frac{6}{\epsilon})\textsuperscript{th}$-level decomposition of $h_i$ and
\[g_{N(g)} = (g_{N(g),11\dots 1}, g_{N(g), 11\dots 2}, \dots, g_{N(g), dd\dots d})\tau_{N(g)}\]
is the $l_\nu(\frac{6}{\epsilon})\textsuperscript{th}$-level decomposition of $g_{N(g)}$.

It follows from the definition of $l_\nu(\frac{6}{\epsilon})$ that $h_i\notin \II_{l_\nu(\frac{6}{\epsilon})}^\nu$ for all $i\in S(g)$. Hence, for all $i\in S(g)$,
\[\sum_{j\in X^{l_\nu(\frac{6}{\epsilon})}}|h_{i,j}|_{\nu + l_\nu(\frac{6}{\epsilon})} \leq |h_i|_{\nu} - 1.\]
Therefore, as long as $n>\frac{3}{\epsilon}$,
\begin{align*}
\sum_{j\in X^{l_\nu(\frac{6}{\epsilon})}}|g_j|_{\nu+l_\nu(\frac{6}{\epsilon})} &\leq n-|S(g)| \\
&<n-\frac{\epsilon}{8}n \\
&=\frac{8-\epsilon}{8}n.
\end{align*}
It follows that for $n>\frac{3}{\epsilon}$,
\begin{align*}
|\Omega_{\nu}^{>}(n,\epsilon)| &\leq \sum_{k_1+\dots +k_{d^{l_\nu(\frac{6}{\epsilon})}} \leq \frac{8-\epsilon}{8}n}C|\Omega_{\nu + l_\nu(\frac{6}{\epsilon})}(k_1)|\dots |\Omega_{l_\nu(\frac{6}{\epsilon})}(k_{d^{l_\nu(\frac{6}{\epsilon})}})| \\
&\leq \left(\frac{8-\epsilon}{8}n\right)^{d^{l_\nu(\frac{6}{\epsilon})}}K(n)\kappa_{\nu + l_\nu(\frac{6}{\epsilon})}^{\frac{8-\epsilon}{8}n}
\end{align*}
where $C=\left[G_\nu : \St_{G\nu}\left(l_\nu\left(\frac{6}{\epsilon}\right)\right)\right]$ and $K(n)$ is a function such that
\[\lim_{n\to\infty}K(n)^{\frac{1}{n}} = 1.\]

We conclude that, for a fixed $\epsilon$ between $0$ and $1$,
\[\limsup_{n\to\infty}|\Omega_\nu^{>}(n, \epsilon)|^{\frac{1}{n}} \leq \kappa_{\nu + l_\nu(\frac{6}{\epsilon})}^{\frac{8-\epsilon}{8}}.\]

On the other hand,
\begin{align*}
|\Omega_\nu^{<}(n,\epsilon)| &\leq \sum_{i=1}^{\epsilon n}\sum_{k_1+\dots + k_i = n}\prod_{j=1}^{i}\delta(k_j)\\
&\leq \sum_{i=1}^{\epsilon n}\sum_{k_1+\dots + k_i = n}\delta\left(\frac{n}{i}\right)^{i}
\end{align*}
by lemma 6 of \cite{BartholdiPochon09}, since $\ln(\delta)$ is concave. Hence, assuming that $\epsilon < \frac{1}{2}$, we have
\begin{align*}
|\Omega_\nu^{<}(n,\epsilon)| &\leq \sum_{i=1}^{\epsilon n}\binom{n}{i-1}\max_{1\leq i \leq \epsilon n}\left\{\delta\left(\frac{n}{i}\right)^i\right\} \\
&\leq \epsilon n \binom{n}{\epsilon n}\max_{1\leq i \leq \epsilon n}\left\{\delta\left(\frac{n}{i}\right)^i\right\}.
\end{align*}
Using the fact that $\binom{n}{\epsilon n} \leq \frac{n^{\epsilon n}}{(\epsilon n )!}$ and Stirling's approximation, we get
\[|\Omega_\nu^{<}(n,\epsilon)| \leq \epsilon n \left(\frac{e}{\epsilon}\right)^{\epsilon n}\frac{C(n)}{\sqrt{2\pi\epsilon n}}\max_{1\leq i \leq \epsilon n}\left\{\delta\left(\frac{n}{i}\right)^i\right\}\]
where $\lim_{n\to\infty}C(n) = 1$. Therefore,
\[\limsup_{n\to\infty}|\Omega_\nu^{<}(n,\epsilon)|^{\frac{1}{n}} \leq \left(\frac{e}{\epsilon}\right)^{\epsilon}\limsup_{n\to\infty}\delta\left(\frac{n}{i_n}\right)^{\frac{i_n}{n}}\]
where $1\leq i_n \leq \epsilon n$ maximises $\delta\left(\frac{n}{i}\right)^{i}$. Let $k_n=\frac{n}{i_n}$. Then, $\frac{1}{\epsilon}\leq k_n \leq n$. Since $\lim_{k\to\infty}\delta(k)^{\frac{1}{k}} = 1$, there must exist $N\in\NN$ such that $\sup_{\frac{1}{\epsilon}\leq k}\{\delta(k)\} = \sup_{\frac{1}{\epsilon}\leq k \leq N}\{\delta(k)\}$. Hence, there exists some $K_\epsilon \in \NN$ such that $K_\epsilon \geq \frac{1}{\epsilon}$ and $\limsup_{n\to\infty}\delta\left(\frac{n}{i_n}\right)^{\frac{i_n}{n}} = \delta(K_{\epsilon})^{\frac{1}{K_\epsilon}}$. We conclude that
\[\limsup_{n\to\infty}|\Omega_\nu^{<}(n,\epsilon)|\leq \left(\frac{e}{\epsilon}\right)^{\epsilon}\delta(K_\epsilon)^{\frac{1}{K_\epsilon}}\]
for some $K_\epsilon \geq \frac{1}{\epsilon}$.

Since, for any $0<\epsilon<\frac{1}{2}$, we have $|\Omega_{\nu}(n)| = |\Omega_\nu^{>}(n,\epsilon)| + |\Omega_\nu^{<}(n,\epsilon)|$,
\begin{align*}
\kappa_\nu &= \lim_{n\to\infty}|\Omega_{\nu}(n)|^{\frac{1}{n}} = \lim_{n\to\infty}\left(|\Omega_\nu^{>}(n,\epsilon)| + |\Omega_\nu^{<}(n,\epsilon)|\right)^{\frac{1}{n}} \\
&\leq \limsup_{n\to\infty}\left(2 \max\left\{|\Omega_\nu^{>}(n,\epsilon)|, |\Omega_\nu^{<}(n,\epsilon)|\right\}\right)^{\frac{1}{n}} \\
&= \max \left\{\limsup_{n\to\infty}|\Omega_\nu^{>}(n,\epsilon)|^{\frac{1}{n}}, \limsup_{n\to\infty}|\Omega_\nu^{<}(n,\epsilon)|^{\frac{1}{n}}\right\}\\
&\leq \max\left\{\kappa_{\nu + l_\nu(\frac{6}{\epsilon})}^{\frac{8-\epsilon}{8}}, e^{\epsilon}\left(\frac{1}{\epsilon}\right)^{\epsilon } \delta\left(K_\epsilon\right)^{\frac{1}{K_\epsilon}}\right\}.
\end{align*}
Let us now fix $0<\epsilon<\frac{1}{2}$. There must exist a $k\in \NN$ such that
\[\kappa_{\nu+k} \leq e^{\epsilon}\left(\frac{1}{\epsilon}\right)^{\epsilon } \delta\left(K_\epsilon\right)^{\frac{1}{K_\epsilon}}.\]
Indeed, otherwise we would have $\kappa_{\nu+i} > e^{\epsilon}\left(\frac{1}{\epsilon}\right)^{\epsilon } \delta\left(K_\epsilon\right)^{\frac{1}{K_\epsilon}}$ for all $i\in\NN$. In particular, this would imply that
\[\kappa_\nu \leq \kappa_{\nu + l_\nu(\frac{6}{\epsilon})}^{\frac{8-\epsilon}{8}}.\]
Let $\nu'\in\NN$ be such that $\nu' \geq \nu + l_\nu(\frac{6}{\epsilon})$ and $\II_\infty^{\nu'} \cap \Omega_{\nu'}(n) \leq \delta(n)$ for all $n\in \NN$ (such a $\nu'$ exist by hypothesis). Then, we would also have
\[\kappa_{\nu'} \leq \kappa_{\nu' + l_{\nu'}(\frac{6}{\epsilon})}^{\frac{8-\epsilon}{8}},\]
and so, using the fact that by Proposition \ref{prop:GrowthRateCannotDecrease}, $\kappa_{\nu + l_{\nu}(\frac{6}{\epsilon})} \leq \kappa_{\nu'}$, we would have
\[\kappa_\nu \leq \kappa_{\nu' + l_{\nu'}(\frac{6}{\epsilon})}^{\left(\frac{8-\epsilon}{8}\right)^2}.\]
By induction, we conclude that for any $m\in \NN^*$, there exists $k_m\in \NN$ such that
\[\kappa_\nu \leq \kappa_{\nu + k_m}^{\left(\frac{8-\epsilon}{8}\right)^{m}}.\]
Since $|S_i| \leq A$ for every $i\in \NN$, we have that $\kappa_i \leq A$ for every $i\in \NN$. Hence, we get that $\kappa_\nu \leq A^{\left(\frac{8-\epsilon}{8}\right)^{m}}$ for every $m\in \NN^*$, which implies that $\kappa_\nu = 1$. This contradicts the hypothesis that $\kappa_\nu > e^{\epsilon}\left(\frac{1}{\epsilon}\right)^{\epsilon } \delta\left(K_\epsilon\right)^{\frac{1}{K_\epsilon}}$.

Therefore, there must exist some $i\in \NN$ such that $\kappa_{\nu + i} \leq e^{\epsilon}\left(\frac{1}{\epsilon}\right)^{\epsilon } \delta\left(K_\epsilon\right)^{\frac{1}{K_\epsilon}}$. By Proposition \ref{prop:GrowthRateCannotDecrease}, we must have
\[\kappa_\nu \leq e^{\epsilon}\left(\frac{1}{\epsilon}\right)^{\epsilon } \delta\left(K_\epsilon\right)^{\frac{1}{K_\epsilon}}.\]
As the above inequality is valid for any $0<\epsilon<\frac{1}{2}$ and 
\[\lim_{\epsilon\to 0}e^{\epsilon}\left(\frac{1}{\epsilon}\right)^{\epsilon } \delta\left(K_\epsilon\right)^{\frac{1}{K_\epsilon}} = 1\]
we must have $\kappa_\nu = 1$, and so $G_\nu$ is of subexponential growth.
\end{proof}

\section{Growth of spinal groups}\label{sec:GrowthSpinalGroups}

Using the techniques developed by Grigorchuk in \cite{Grigorchuk85}, one can show that every spinal group acting on the binary rooted tree is of subexponential growth.

In this section, we will study the growth of some spinal groups acting on the 3-regular rooted tree $T_3$. We will be able to prove that the growth is subexponential in several new cases. In particular, our results will imply that all the groups in \v{S}uni\'{k}'s family acting on $T_3$ (Example \ref{ex:SunicGroups}) are of subexponential growth. While this was already known for torsion groups, this was previously unknown for groups with elements of infinite order, except for the case of the Fabrykowski-Gupta group.

Unfortunately, we were unable to obtain similar results for spinal groups acting on rooted trees of higher degrees, as the methods used here do not seem to have obvious generalizations in those settings.

\subsection{Growth of spinal groups acting on $T_3$}

Let $m\in \NN$, $\ZZ/3\ZZ \cong A = \langle (1 2 3) \rangle \subseteq \Sym(3)$ and $B=(\ZZ/3\ZZ)^m$. Let
\[\Omega = \left\{\left\{\omega_{ij}\right\}_{i\in\NN, 1\leq j \leq 2} \mid \omega_{i,1}\in \Epi(B,A), \omega_{i,2} = 1, \bigcap_{i\geq k}\ker(\omega_{ij}) = 1 \forall k\in \NN\right\}\]
be a set of sequences of homomorphisms of $B$ into $A$ and $\sigma\colon \Omega \rightarrow \Omega$ be the left-shift (see Section \ref{subsubsection:SpinalGroups}). For any $\omega\in \Omega$, let us define $A_\omega = A$. Using the notation of Section \ref{subsubsection:SpinalGroups}, we get spinal groups $G_\omega = \langle A, B_\omega \rangle$ acting on $T_3$ which naturally come equipped with a word pseudonorm $|\cdot|_\omega$ assigning length 0 to elements of $A$ and length 1 to elements of $B_\omega$.

\begin{notation}
In order to streamline the notation, we will drop the indices $\omega$ wherever it is convenient and rely on context to keep track of which group we are working in. We will also drop the second index in the sequences of $\Omega$ and write $\omega=\omega_0\omega_1\dots \in \Omega$, which is a minor abuse of notation.

The set of incompressible elements of $G_\omega$ will be denoted by $\II_\infty^\omega$, and we will write $\II_\infty^\omega(n)$ for the set of incompressible elements of length $n$.

We will write $a=(1 2 3) \in A$, and for any $b\in B_\omega$, we will write $b^{a^i} = a^iba^{-i}$ where $i \in \ZZ/3\ZZ$.
\end{notation}

\begin{remark}
As in the case of the binary tree, we have that for every $\omega\in \Omega$, the group $G_\omega$ is a quotient of $A\ast B_\omega$. Hence, every element of $g_\omega$ can be written as an alternating product of elements of $A$ and $B_\omega$.

It follows that every $g\in G_\omega$ of length $n$ can be written as
\[g=\beta_1^{a^{c_1}}\beta_{2}^{a^{c_2}}\dots \beta_n^{a^{c_n}}a^{s}\]
for some $s\in \ZZ/3\ZZ$, $\beta\colon \{1,2,\dots, n\}\rightarrow B_\omega$ and $c\colon \{1,2,\dots, n\} \rightarrow \ZZ/3\ZZ$ (where we use indices to denote the argument of the function in order to make the notation more readable).
\end{remark}

\begin{notation}
For any $n\in \NN$ at least 2 and $c\colon \{1,2,\dots, n\} \rightarrow \ZZ/3\ZZ$, we will denote by $\partial c\colon \{1,2,\dots n-1\} \rightarrow \ZZ/3\ZZ$ the \emph{discrete derivative} of $c$, that is,
\[\partial c(k) = c_{k+1}-c_k.\]
\end{notation}

\begin{lemma}\label{lemma:DerivativeOfSequenceOfConjugation}
Let $\omega\in \Omega$ and $g\in G_\omega$ with $|g|=n$. Writing 
\[g=\beta_1^{a^{c_1}}\beta_{2}^{a^{c_2}}\dots \beta_n^{a^{c_n}}a^{s}\]
for some $n\in \NN$, $s\in \ZZ/3\ZZ$, $\beta\colon \{1,2,\dots, n\}\rightarrow B_\omega$ and $c\colon \{1,2,\dots, n\} \rightarrow \ZZ/3\ZZ$, if $g\in \II_\infty^\omega(n)$, then there exists $m_c\in \{1,2,\dots, n\}$ such that
\[\partial c(k) = \begin{cases}
2 \text{ if } k< m_c \\
1 \text{ if } k\geq m_c.
\end{cases}\]
\end{lemma}
\begin{proof}
If there exists $k\in \{1,2,\dots, n-1\}$ such that $\partial c(k) = 0$, then $c(k)=c(k+1)$, which means that 
\[|g|=|\beta_1^{a^{c_1}}\beta_2^{a^{c_2}}\dots (\beta_k\beta_{k+1})^{a^{c_k}} \dots \beta_n^{a^{c_n}}a^{s}| \leq n-1\]
a contradiction. Hence, $\partial c(k) \ne 0$ for all $k\in \{1,2,\dots, n-1\}$.

Therefore, to conclude, we only need to show that if $\partial c(k) = 1$ for some $k\in\{1,2,\dots, n-2\}$, then $\partial c(k+1) \ne 2$. For the sake of contradiction, let us assume that $\partial c(k) = 1$ and $\partial c(k+1) = 2$ for some $k\in\{1,2,\dots, n-2\}$. Without loss of generality, we can assume that $c_k=0$ (indeed, it suffices to conjugate by the appropriate power of $a$ to recover the other cases). We have
\begin{align*}
\beta_k\beta_{k+1}^{a}\beta_{k+2} &= (\alpha_k, 1, \beta_k)(\beta_{k+1},\alpha_{k+1}, 1)(\alpha_{k+2}, 1, \beta_{k+2})\\
&=(\alpha_k\beta_{k+1}\alpha_{k+2}, \alpha_{k+1}, \beta_{k}\beta_{k+2})
\end{align*}
for some $\alpha_k,\alpha_{k+1}, \alpha_{k+2} \in A$. Since
\[|\alpha_k\beta_{k+1}\alpha_{k+2}| + | \alpha_{k+1}| + |\beta_{k}\beta_{k+2}| = 2 < 3 = |\beta_k\beta_{k+1}^{a}\beta_{k+2}|,\]
there is some length reduction on the first level, so $g\notin \II_\infty^\omega$.
\end{proof}

It follows from Lemma \ref{lemma:DerivativeOfSequenceOfConjugation} that an element
\[g=\beta_1^{a^{c_1}}\beta_{2}^{a^{c_2}}\dots \beta_n^{a^{c_n}}a^{s}\in \II_\infty^\omega\]
is uniquely determined by the data $(\beta, s, c_1, m_c)$, where $\beta\colon \{1,2,\dots, n\} \rightarrow B_\omega$, $s,c_1\in \ZZ/3\ZZ$ and $m_c\in \{1,2,\dots, n\}$. Of course, not every possible choice corresponds to an element of $\II_\infty^\omega$. In what follows, we will bound the number of good choices for $(\beta, s, c_1, m_c)$.

\begin{prop}\label{prop:T3PolynomialGrowthOfIncompressibles}
Let $\omega = \omega_0\omega_1\omega_2\dots \in \Omega$ and let $l\in \NN$ be the smallest integer such that $\cap_{i=0}^l\ker(\omega_i) = 1$. Then, there exists a constant $C_l\in \NN$ such that
\[|\II_\infty^\omega(n)| \leq C_ln^{\frac{3^{l+2} - 1}{2}}\]
for all $n\in \NN$.
\end{prop}
\begin{proof}
Let us fix $n\in \NN$, $s, c_1\in\ZZ/3\ZZ$ and $m_c\in \{1,2,\dots, n\}$, and let $c\colon \{1,2,\dots, n\}\rightarrow \ZZ/3\ZZ$ be the unique sequence such that $c(1)=c_1$ and
\[\partial c(k) = \begin{cases}
2 \text{ if } k< m_c \\
1 \text{ if } k\geq m_c
\end{cases}\]
for all $k\in\{1,2,\dots n-1\}$. We will try to bound the number of maps $\beta\colon \{1,2,\dots,n\}\rightarrow B_\omega\setminus\{1\}$ such that
\[g=\beta_1^{a^{c_1}}\beta_{2}^{a^{c_2}}\dots \beta_n^{a^{c_n}}a^{s} \in \II_\infty^\omega(n).\]
Assuming that $g\in \II_\infty^\omega(n)$, let us look at the first-level decomposition of $g$,
\[g=(g_1,g_2,g_3)a^s.\]
Since $g\in \II_\infty^\omega$, we must have $|g|=|g_1|+|g_2|+|g_3|$. As for any $k\in \{1,2,\dots,n\}$, $\beta_k^{a^{c_k}}$ adds 1 to the length of $g$ and to the length of exactly one of $g_1,g_2$ or $g_3$ no matter the value of $\beta_k$, we conclude that $|g_1|, |g_2|, |g_3|$ do not depend on $\beta$.

Since $g\in \II_\infty^\omega$, we must also have that $g_1,g_2,g_3\in\II_\infty^\omega$. Hence, for $i=1,2,3$, there must exist $s^{(i)},c_1^{(i)} \in \ZZ/3\ZZ$, $m_c^{(i)}\in \{1,2,\dots, |g_i|\}$ and $\beta^{(i)} \colon \{1,2,\dots, |g_i|\}\rightarrow B_{\sigma(\omega)}\setminus \{1\}$ such that
\[g_i=(\beta_1^{(i)})^{a^{c_1^{(i)}}}(\beta_{2}^{(i)})^{a^{c_2^{(i)}}}\dots (\beta_{|g_i|}^{(i)})^{a^{c_{|g_i|}^{(i)}}}a^{s^{(i)}}\]
where $c^{(i)}$ is the unique map satisfying $c^{(i)}(1)=c_1^{(i)}$ and
\[\partial c^{(i)}(k) = \begin{cases}
2 \text{ if } k< m_c^{(i)} \\
1 \text{ if } k\geq m_c^{(i)}.
\end{cases}\]
It is clear that the maps $\beta^{(i)}$ are completely determined by the map $\beta$. Therefore, to specify $g_1,g_2,g_3$, we only need to consider $s^{(i)}, c_1^{(i)}$ and $m_c^{(i)}$. However, the choice of $s^{(i)}, c_1^{(i)}$ and $m_c^{(i)}$ impose some non-trivial conditions on $\beta$. Indeed, once these three numbers are fixed, we have
\[g_i=a^{k_1^{(i)}}\square a^{k_2^{(i)}}\square \dots \square a^{k_{|g_i|}^{(i)}}\square a^{k_{|g_i|+1}^{(i)}} \]
where $\square$ are unspecified elements of $B_{\sigma(\omega)}\setminus \{1\}$ and the $k_j$ are uniquely determined by $s^{(i)}, c_1^{(i)}$ and $m_c^{(i)}$. These $k_j^{(i)}$ completely determine $\omega_0(\beta_k)$ for all but at most one $k\in \{1,2,\dots, n\}$. Indeed, for $k\in\{1,2,\dots,n\}$, we have
\[\beta_k^{a^{c_k}} = 
\begin{cases}
(\alpha_k^{\omega_0}, 1, \beta_k) & \text{ if } c_k=0 \\
(\beta_k, \alpha_k^{\omega_0}, 1) & \text{ if } c_k=1 \\
(1, \beta_k, \alpha_k^{\omega_0}) & \text{ if } c_k=2
\end{cases}\]
where $\alpha_k^{\omega_0} = \omega_0(\beta_k)\in A$. As long as $\partial c$ is constant, $c$ is a subsequence of
\[\dots 021021021 \dots\]
or
\[\dots 012012012 \dots \]
which means that $g_1,g_2$ and $g_3$ will be given by an alternating product of $\alpha_k^{\omega_0}$ and $\beta_k$, except perhaps at $m_c$, if $1<m_c<n$. Let us assume for the sake of illustration that $c_{m_c-1} = 0$ (the other two cases are obtained simply by permuting the indices). In that case, we have
\begin{align*}
\beta_{m_c-1}\beta_{m_c}^{a^{2}}\beta_{m_c+1} &= (\alpha_{m_c-1}^{\omega_0}, 1, \beta_{m_c-1})(1,\beta_{m_c}, \alpha_{m_c}^{\omega_0})(\alpha_{m_c+1}^{\omega_0}, 1, \beta_{m_c+1})\\
&=(\alpha_{m_c - 1}^{\omega_0}\alpha_{m_c+1}^{\omega_0}, \beta_{m_c}, \beta_{m_c-1}\alpha_{m_c}^{\omega_0}\beta_{m_c+1})
\end{align*}
Therefore, the choice of $s^{(i)}, c_1^{(i)}$ and $m_c^{(i)}$ give us conditions on $\omega_0(\beta_k)$ for all but at most one $k$.

By induction, on level $l+1$, the choice of $s^{(x)}, c_1^{(x)},m_c^{(x)}$ for all words $x$ of length at most $l+1$ in the alphabet $\{1,2,3\}$ (that is, for all vertices of the tree up to level $l+1$) determine $\omega_i(\beta_k)$ for all $1\leq i \leq l$ and for all $k\in\{1,2,\dots, n\}$ except for at most $\sum_{j=0}^{l}3^{j} = \frac{3^{l+1}-1}{2}$. Since $\cap_{i=0}^l\ker{\omega_i} = \{1\}$, for each $k$, there is at most one $\beta_k\in B$ having the prescribed images $\omega_i(\beta_k)$ for all $1\leq i \leq l$.

Since there are $\frac{3^{l+2}-1}{2}$ vertices in the tree up to level $l+1$, we have $3^{\frac{3^{l+2}-1}{2}}$ choices for $s(x)$ and $3^{\frac{3^{l+2}-1}{2}}$ choices for $c_1(x)$. Since $m_c^{(x)}$ satisfies $1\leq m_c^{(x)} \leq n$, there are at most $n^{\frac{3^{l+2}-1}{2}}$ choices for $m_c^{(x)}$. Once all these choices are made, $\beta$ is completely determined, except for at most $\frac{3^{l+1}-1}{2}$ values. For each of these, we have $|B|-1$ choices, so there are at most $(|B|-1)^{\frac{3^{l+1}-1}{2}}$ choices for $\beta$. Hence, there are at most
\[C_ln^{\frac{3^{l+2}-1}{2}}\]
elements in $\II_\infty^\omega(n)$, where
\[C_l = 3^{3^{l+2}-1}(|B|-1)^{\frac{3^{l+1}-1}{2}}.\]

\end{proof}

With this, we can prove that many spinal groups are of subexponential growth.

\begin{thm}
Let $\omega\in\Omega$ and $G_\omega$ be the associated spinal group of automorphisms of $T_3$. If there exists $l\in \NN$ such that $\cap_{i=k}^{k+l}\ker(\omega_i) = 1$ for infinitely many $k\in \NN$, then $G_\omega$ is of subexponential growth.
\end{thm}
\begin{proof}
According to Proposition \ref{prop:T3PolynomialGrowthOfIncompressibles}, there exist infinitely many $k\in \NN$ such that
\[|\II_\infty^{\sigma^{k}(\omega)}(n)| \leq C_ln^{\frac{3^{l+2}-1}{2}}\]
for some $C_l\in\NN$. Since $\ln(C_ln^{\frac{3^{l+2}-1}{2}})$ is concave, the result follows from Theorem \ref{thm:GrowthCriterion}.
\end{proof}

\section*{Acknowledgements}
This work was supported by the Natural Sciences and Engineering Research Council of Canada. The author would like to thank Tatiana Nagnibeda for many useful discussions and suggestions, as well as Laurent Bartholdi and Rostislav Grigorchuk for reading previous versions of this paper and offering helpful comments.

\bibliography{growth}

\begin{thebibliography}{10}

\bibitem{Bartholdi15}
Laurent Bartholdi.
\newblock Growth of groups and wreath products.
\newblock {\em arXiv preprint arXiv:1512.07044}, 2015.

\bibitem{BartholdiGrigorchukSunic03}
Laurent Bartholdi, Rostislav~I. Grigorchuk, and Zoran \v{S}uni\'{k}.
\newblock Branch groups.
\newblock In {\em Handbook of algebra, {V}ol. 3}, volume~3 of {\em Handb.
  Algebr.}, pages 989--1112. Elsevier/North-Holland, Amsterdam, 2003.

\bibitem{BartholdiPochon09}
Laurent Bartholdi and Floriane Pochon.
\newblock On growth and torsion of groups.
\newblock {\em Groups Geom. Dyn.}, 3(4):525--539, 2009.

\bibitem{BartholdiSunic01}
Laurent Bartholdi and Zoran \v{S}uni\'{k}.
\newblock On the word and period growth of some groups of tree automorphisms.
\newblock {\em Comm. Algebra}, 29(11):4923--4964, 2001.

\bibitem{Milnor68}
L.~Carlitz, A.~Wilansky, John Milnor, R.~A. Struble, Neal Felsinger, J.~M.~S.
  Simoes, E.~A. Power, R.~E. Shafer, and R.~E. Maas.
\newblock Problems and {S}olutions: {A}dvanced {P}roblems: 5600-5609.
\newblock {\em Amer. Math. Monthly}, 75(6):685--687, 1968.

\bibitem{FabrykowskiGupta85}
Jacek Fabrykowski and Narain Gupta.
\newblock On groups with sub-exponential growth functions.
\newblock {\em J. Indian Math. Soc. (N.S.)}, 49(3-4):249--256 (1987), 1985.

\bibitem{FabrykowskiGupta91}
Jacek Fabrykowski and Narain Gupta.
\newblock On groups with sub-exponential growth functions. {II}.
\newblock {\em J. Indian Math. Soc. (N.S.)}, 56(1-4):217--228, 1991.

\bibitem{Grigorchuk80}
Rostislav~I. Grigorchuk.
\newblock On {B}urnside's problem on periodic groups.
\newblock {\em Funktsional. Anal. i Prilozhen.}, 14(1):53--54, 1980.

\bibitem{Grigorchuk83}
Rostislav~I. Grigorchuk.
\newblock On the {M}ilnor problem of group growth.
\newblock {\em Dokl. Akad. Nauk SSSR}, 271(1):30--33, 1983.

\bibitem{Grigorchuk85}
Rostislav~I. Grigorchuk.
\newblock Degrees of growth of finitely generated groups and the theory of
  invariant means.
\newblock {\em Izv. Akad. Nauk SSSR Ser. Mat.}, 48(5):939--985, 1984.

\bibitem{GuptaSidki83}
Narain Gupta and Sa\"{i}d Sidki.
\newblock On the {B}urnside problem for periodic groups.
\newblock {\em Math. Z.}, 182(3):385--388, 1983.

\bibitem{Nekrashevych07}
Volodymyr Nekrashevych.
\newblock A minimal {C}antor set in the space of 3-generated groups.
\newblock {\em Geom. Dedicata}, 124:153--190, 2007.

\bibitem{Neumann86}
Peter~M. Neumann.
\newblock Some questions of {E}djvet and {P}ride about infinite groups.
\newblock {\em Illinois J. Math.}, 30(2):301--316, 1986.

\bibitem{Sunic07}
Zoran \v{S}uni\'{c}.
\newblock Hausdorff dimension in a family of self-similar groups.
\newblock {\em Geom. Dedicata}, 124:213--236, 2007.

\end{thebibliography}
\bibliographystyle{plain}

\end{document}